\newtheorem*{maintheorem}{Main Theorem}
\newtheorem{theorem}{Theorem}
\newtheorem{example}[theorem]{Example}
\newtheorem{claim}[theorem]{Claim}
\newtheorem{lemma}[theorem]{Lemma}
\newtheorem{problem}[theorem]{Problem}
\theoremstyle{definition}
\newtheorem{definition}[theorem]{Definition}
\newcommand{\w}{\omega}
\newcommand{\e}{\varepsilon}
\newcommand{\IR}{\mathbb R}
\newcommand{\IN}{\mathbb N}
\newcommand{\pr}{\mathrm{pr}}
\title[Haar-open sets and Steinhaus sum Theorem]{Haar-open sets: a right way of generalizing the Steinhaus sum theorem to non-locally compact groups}
\author{Taras Banakh}
\address{Ivan Franko National University of Lviv (Ukraine) and Jan Kochanowski University in Kielce (Poland)}
\email{t.o.banakh@gmail.com}
\keywords{Polish group, Steinhaus Theorem, Haar-null set, Haar-open set, sum-set}
\subjclass{28C10, 22B99}
\begin{document}
\begin{abstract}
Let $X$ be the countable product of Abelian locally compact Polish groups and $A,B\subset X$ be two Borel sets, which are not Haar-null in $X$. We prove the then the sum-set $A+B:=\{a+b:a\in A,\;\;b\in B\}$ is {\em Haar-open} in  the sense that for any non-empty compact subset $K\subset X$ and point $p\in K$ there exists a point $x\in X$ such that the set $K\cap(A+B+s)$ is a neighborhood of $p$ in $K$.
This is a generalization of the classical Steinhaus Theorem (1920) to non-locally compact groups.
We do not know if this generalization holds for Banach spaces. 
\end{abstract}
\maketitle

In \cite{Stein} Steinhaus proved that for any Borel subsets $A,B$ of positive Haar measure in the real line the sum-set $A+B:=\{a+b:a\in A,\;b\in B\}$ has non-empty interior and the difference set $A-A:=\{a-b:a,b\in A\}$ is a neighborhood of zero in the real line. In \cite{Weil} Weil generalized this theorem of Steinhaus to all locally compact topological groups. The Steinhaus-Weil Theorem has many important applications, for example, to automatic continuity \cite{RS} or to functional  equations \cite{Kuczma2}.

In \cite{Ch} Christensen introduced the notion of Haar-null set in a topological group and generalized the difference part of the Steinhaus Theorem to non-locally compact Polish groups proving that for any Borel subset $A$ of a Polish Abelian group $X$ the difference $A-A$ is a neighborhood of zero in $X$ if the set $A$ is not Haar-null.

Following Christensen, we define a universally measurable subset $A$ of an Abelian topological group $X$ to be {\em Haar-null} if there exists a $\sigma$-additive Borel probability measure $\mu$ on $X$ such that $\mu(A+x)=0$ for all $x\in X$. A subset $A$ of a topological space $X$ is called {\em universally measurable} is it is measurable with respect to any Radon $\sigma$-additive Borel probability measure on $X$. A Borel probability measure $\mu$ on a topological space $X$ is called {\em Radon} if for any Borel subset $B\subset X$ and any $\e>0$ there exists a compact set $K\subset B$ such that $\mu(B\setminus K)<\e$. Any Radon measure $\mu$ on a topological space  is {\em regular}, which means that for any $\e>0$, any closed set $F\subset X$ has an open neighborhood $U\subset X$ such that $\mu(U\setminus F)<\e$. 

Let us observe that in contrast to the difference part of the Steinhaus Theorem, its sum-part does not generalize to non-locally compact groups.

\begin{example} The subset $A=[0,\infty)^\omega$ is not Haar-null in the Polish group $\IR^\w$, yet $A=A+A$ has empty interior in $\IR^\w$.
\end{example}

Nonetheless, in this paper we generalize the sum-part of the Steinhaus Theorem using a proper interpretation of non-empty interior for $A+B$ in non-locally compact case.% In this paper we restrict ourselves to considering Abelian topological groups, so {\em all groups appearing in this paper are assumed to be Abelian}.

\begin{definition} A subset $A$ of a topological group $X$ is called {\em Haar-open} if for any compact subset $K\subset X$ and any point $x\in K$ there exists a point $y\in X$ such that the set $K\cap (A+y)$ is a neighborhood of $x$ in $K$.
\end{definition}
 
Haar-open sets in Abelian complete metric groups admit the following characterization.
 
 \begin{theorem} A subset $A$ of an Abelian complete metric group $X$ is Haar-open if and only if for any compact set $K\subset X$ there exists a point $x\in X$ such that the set $K\cap(A+x)$ has non-empty interior in $K$.
 \end{theorem}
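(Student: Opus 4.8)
The \emph{only if} part is straightforward: if $A$ is Haar-open, then for a nonempty compact $K$ choose any $p\in K$ and a point $y$ with $K\cap(A+y)$ a neighborhood of $p$ in $K$; such a neighborhood contains a nonempty set open in $K$, so $K\cap(A+y)$ has nonempty interior in $K$. The work is in the \emph{if} part, which I plan to prove by contradiction. Fix a complete invariant metric $d$ on $X$ (available since $X$ is an Abelian, completely metrizable group) and write $B(a,\e)$ for the $d$-ball. Since the sets $K\cap B(p,\delta)$ form a neighborhood base at $p$ in $K$, the two conditions can be rephrased: $A$ is Haar-open iff for every nonempty compact $K$ and every $p\in K$ there are $y\in X$ and $\delta>0$ with $K\cap B(p,\delta)\subseteq A+y$; while the hypothesis says that for every nonempty compact $K$ there are $x\in X$, $q\in K$, $\rho>0$ with $K\cap B(q,\rho)\subseteq A+x$. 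So I assume the hypothesis holds but $A$ is not Haar-open, witnessed by a compact $K_0$ and a point $p_0\in K_0$ with
\[
K_0\cap B(p_0,\delta)\not\subseteq A+y\qquad\text{for all }y\in X,\ \delta>0 .
\]
Replacing $K_0$ by $K_0-p_0$ (this replaces each $A+y$ above by $A+(y+p_0)$, and $y\mapsto y+p_0$ is a bijection of $X$), I may assume $p_0=0$.

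The plan now is to manufacture a single compact set $H\subseteq X$ so "self-similar" that the hypothesis applied to $H$ contradicts the displayed line. I would pick positive reals $r_n=2^{-n}$, put $N_n:=K_0\cap\overline{B(0,r_n)}$ — a compact neighborhood of $0$ in $K_0$ with $0\in N_n$ — and set
\[
H:=\Bigl\{\,\sum_{n=1}^{\infty}v_n\ :\ v_n\in N_n\text{ for all }n\,\Bigr\}.
\]
Since $\sum_n r_n<\infty$ and $d$ is complete and invariant, each such series converges and the summation map $\prod_n N_n\to X$ is a uniform limit of its continuous partial sums, hence continuous, so $H$ is a nonempty compact subset of $X$. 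The heart of the matter is the following self-similarity, which I would prove next: for every $h\in H$ and every $\rho>0$ there are $t\in X$ and $\delta>0$ with $\emptyset\neq(K_0\cap B(0,\delta))+t\subseteq H\cap B(h,\rho)$. Indeed, writing $h=\sum_n v_n$ and choosing $m$ with $r_{m+1}<\rho/2$, one takes $\delta:=r_{m+1}$ and $t:=h-v_{m+1}$: any $w\in K_0\cap B(0,\delta)$ lies in $N_{m+1}$, so substituting $w$ for the $(m{+}1)$-st summand of $h$ exhibits $w+t$ as an element of $H$; moreover, by invariance, $d(w+t,h)=d(w-v_{m+1},0)\le d(w,0)+d(v_{m+1},0)<2r_{m+1}<\rho$, so $w+t\in B(h,\rho)$; and the set is nonempty because $0\in K_0\cap B(0,\delta)$. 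Finally I would apply the hypothesis to the compact set $H$ to get $x\in X$, $h\in H$, $\rho>0$ with $H\cap B(h,\rho)\subseteq A+x$; self-similarity then yields $t,\delta$ with $(K_0\cap B(0,\delta))+t\subseteq A+x$, i.e.\ $K_0\cap B(0,\delta)\subseteq A+(x-t)$, contradicting the displayed line with $y=x-t$. Hence $A$ is Haar-open.

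The only genuinely delicate point is the design of $H$. Using finitely many translates, or a null sequence of translates, of a fixed compact neighborhood of $p_0$ in $K_0$ will not do: the hypothesis then only produces a relatively open piece located near \emph{some} point of one translate, which need not correspond to $p_0$, and at the accumulation points of such a configuration the local structure degenerates, so one cannot transport the piece back to a neighborhood of $p_0$ in $K_0$. The infinite-Minkowski-sum construction repairs both defects at once: it is compact — this is exactly where completeness of $X$ enters, to make the infinite sums exist — and at every point of $H$ and at every scale it \emph{literally} contains a translated copy of the germ of $K_0$ at $p_0$, obtained simply by perturbing one far-out coordinate in the series representation of the point. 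Everything else (correctness of the reformulations, continuity of the summation map, the arithmetic in the self-similarity estimate) is routine bookkeeping.
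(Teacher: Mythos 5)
Your proof is correct and follows essentially the same route as the paper: both build the compact infinite Minkowski sum $\sum_n N_n$ of shrinking neighborhoods of the base point (using completeness and the invariant metric for convergence), apply the hypothesis to it, and transport the resulting relatively open piece back to a neighborhood of $p$ in $K$ by perturbing a single far-out summand. Your version is merely phrased as a contradiction, and is if anything slightly more careful than the paper in first translating so that $p=0$, which is what makes the series $\sum_n v_n$ actually converge.
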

 
 \begin{proof} The ``only if" part is trivial. To prove the ``if'' part, take any non-empty compact space $K\subset X$ and a point $p\in K$. Let $\rho$ be an invariant  complete metric generating the topology of the group $X$.
 
For every $n\in\w$ consider the compact set $K_n:=\{x\in K:\rho(x,p)\le\frac1{2^n}\}$ neighborhood of the point $p$ in $K$. Next, consider the compact space $\Pi:=\prod_{n\in\w}K_n$ and the map 
$$f:\Pi\to X,\;\;f:(x_n)_{n\in\w}\mapsto\sum_{n=0}^\infty x_n,$$
which is well-defined and continuous because of the completeness of the metric $\rho$ and the convergence of the series $\sum_{n=0}^\infty\frac1{2^n}$. 

If for the compact set $f(\Pi)$ in $X$ there exists $x\in X$ such that the intersection $f(\Pi)\cap(A+x)$ contains some non-empty open subset $U$ of $f(\Pi)$, then we can choose any point $(x_i)_{i\in\w}\in f^{-1}(U)$ and find $n\in\w$ such that $\{(x_i)\}_{i<n}\times \prod_{i\ge n}K_i\subset f^{-1}(U)$. Then for the point $s=\sum_{i<n}x_i\in \sum_{i<n}K_i\subset X$, we get $s+K_n\subset U\subset A+x$ and hence the intersection $K\cap (A+x-s)\supset K_n$ is a neighborhood of $p$ in $K$.
\end{proof}

It is clear that a subset of a locally compact topological group is Haar-open if and only if it has non-empty interior. That is why the following theorem can be considered as a generalization of the Steinhaus sum theorem.

\begin{maintheorem} Let $X:=\prod_{n\in\w}X_n$ be the countable product of locally compact  Abelian topological groups. If universally measurable sets $A,B\subset X$ are not Haar-null, then their sum $A+B$ is Haar-open in $X$.
\end{maintheorem}

This theorem follows from Lemmas~\ref{l7} and \ref{l8} proved below. In the proofs we shall use special measures, introduced in Definitions~\ref{d:stein} and \ref{d:inv}. For a Borel measure $\mu$ on a topological space $X$ a Borel set $B\subset X$ will be called {\em $\mu$-positive} if $\mu(B)>0$.

\begin{definition}\label{d:stein} A Borel probability measure $\mu$ on an Abelian topological group $X$  is called {\em Steinhaus-like} if 
\begin{enumerate}
\item $\mu(-B)=\mu(B)$ for any Borel set $B\subset X$ and
\item for any $\mu$-positive Borel sets $A,B\subset X$, the intersection $(A+a)\cap(B+b)$ is $\mu$-positive for some points $a,b\in X$.
\end{enumerate}
\end{definition}

\begin{definition}\label{d:inv} Let $K$ be a compact set in an Abelian  topological group $X$. A $\sigma$-additive Borel probability measure $\mu$ on $X$ is called {\em locally $K$-invariant} if there exists an increasing sequence $(M_n)_{n\in\w}$ of Borel sets in $X$ such that
\begin{enumerate}
\item $\lim_{n\to\infty}M_n=1$ and
\item for every $n\in\IN$ there exists a neighborhood $U\subset X$ of $\theta$ such that $\mu(B+x)=\mu(B)$ for any Borel set $B\subset M_n$ and any $x\in U\cap K$.
\end{enumerate}
\end{definition}

\begin{lemma}\label{l6}  If a Radon Borel probability measure $\mu$ on an Abelian topological group $X$ is locally $K$-invariant for some compact set $K\ni\theta$, then for any $\mu$-positive compact set $C$ the set $P:=\{x\in K:\mu\big(C\cap(C+x)\big)>0\}$ is a neigborhood of $\theta$ in $K$.
\end{lemma}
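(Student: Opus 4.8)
The plan is to transfer the estimate to the Borel sets $M_n$ on which small translations from $K$ preserve $\mu$, and then to play this partial invariance off against the outer regularity of the Radon measure $\mu$. Put $d:=\mu(C)>0$ and let $(M_n)_{n\in\w}$ be the increasing sequence of Borel sets provided by the local $K$-invariance of $\mu$, so that $\lim_{n\to\infty}\mu(M_n)=1$.

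First I would fix an index $n$ with $\mu(X\setminus M_n)<\tfrac d3$. Then $C\cap M_n$ is a Borel subset of $M_n$ with $\mu(C\cap M_n)=\mu(C)-\mu(C\setminus M_n)>\tfrac{2d}{3}$, and condition (2) of Definition~\ref{d:inv} provides a neighborhood $U$ of $\theta$ in $X$ with $\mu\big((C\cap M_n)+x\big)=\mu(C\cap M_n)$ for every $x\in U\cap K$. Independently, since $C$ is compact, hence closed, the regularity of the Radon measure $\mu$ (recalled above) yields an open set $V\supseteq C$ with $\mu(V\setminus C)<\tfrac d3$; and the elementary fact that a compact subset of an open set stays inside that open set after a sufficiently small translation gives an open neighborhood $W$ of $\theta$ in $X$ with $C+W\subseteq V$.

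Now I would take an arbitrary $x\in W\cap U\cap K$. Since $(C\cap M_n)+x\subseteq C+x\subseteq V$, the set $\big((C\cap M_n)+x\big)\setminus C$ lies in $V\setminus C$ and so has $\mu$-measure $<\tfrac d3$, while $\big((C\cap M_n)+x\big)\cap C\subseteq C\cap(C+x)$. Combining monotonicity and additivity of $\mu$ with the identity $\mu\big((C\cap M_n)+x\big)=\mu(C\cap M_n)>\tfrac{2d}{3}$, one obtains
\[
\mu\big(C\cap(C+x)\big)\ \ge\ \mu\big((C\cap M_n)+x\big)\ -\ \mu\big(((C\cap M_n)+x)\setminus C\big)\ >\ \tfrac{2d}{3}-\tfrac d3\ =\ \tfrac d3\ >\ 0,
\]
so $x\in P$. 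Hence $W\cap U\cap K\subseteq P$, and as $W\cap U$ is a neighborhood of $\theta$ in $X$, the set $P$ is a neighborhood of $\theta$ in $K$, as claimed.

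I do not anticipate a real obstacle. The only points that need a little care are the (clearly intended) reading of ``$\lim_{n\to\infty}M_n=1$'' as $\lim_{n\to\infty}\mu(M_n)=1$, which is what lets one trap a fixed fraction of $C$ inside some $M_n$, and the routine topological-group lemma that a compact set $C$ contained in an open set $V$ satisfies $C+W\subseteq V$ for some neighborhood $W$ of $\theta$ --- proved by covering $C$ by finitely many sets $c_i+W_i$ with $c_i\in C$, $W_i$ a neighborhood of $\theta$, and $c_i+W_i+W_i\subseteq V$, and taking $W:=\bigcap_i W_i$. The single essential use of the hypothesis is that translation by a small $x$ need not preserve $\mu$ on $X$ as a whole, which is precisely why the estimate must be carried out on $C\cap M_n$ rather than on $C$ itself.
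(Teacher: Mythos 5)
Your proof is correct and follows essentially the same route as the paper's: trap most of $C$ inside some $M_n$, use condition (2) of Definition~\ref{d:inv} to preserve the measure of that piece under small translations from $K$, and use regularity plus compactness of $C$ to ensure the translated piece cannot escape from $C$ by more than a small amount. The only cosmetic differences are that the paper passes to a compact subset $C'\subset C\cap M_n$ and phrases the final estimate via inclusion--exclusion on $C'\cup(C'+x)\subset C'+U$, whereas you work with the Borel set $C\cap M_n$ directly and subtract the measure of $\big((C\cap M_n)+x\big)\setminus C\subset V\setminus C$; you also make explicit the standard fact that $C+W\subset V$ for some neighborhood $W$ of $\theta$, which the paper uses implicitly.
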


\begin{proof} Let $(M_n)_{n\in\w}$ be an increasing sequence of Borel sets, witnessing that the measure $\mu$ is locally $K$-invariant. 
Since $\lim_{n\to\infty}\mu(M_n)=1$, there exists $n\in\IN$ such that $\mu(C\cap M_n)>\frac34\mu(C)$ and hence $\mu(C')>\frac34\mu(C)$ for some compact subset $C'\subset C\cap M_n$. By Definition~\ref{d:inv}, for the number $n$ there exists a neighborhood $U\subset K$ of $\theta$ such that $\mu(B+x)=\mu(B)$ for any Borel set $B\subset M_n$ and any $x\in U$. By the regularity of the measure $\mu$, we can assume the neighborhood $U$ to be so small that $\mu(C'+U)<\frac54\mu(C')$. It remains to prove that $K\cap U\subset P$. Indeed, for any $x\in K\cap U$, the set $C'\subset M_n$ has measure $\mu(C')=\mu(C'+x)$. Since $C'+x\subset C'+U$, we conclude that
$$
\begin{aligned}
\mu\big(C\cap (C+x)\big)&\ge\mu\big(C'\cap (C'+x)\big)=\mu(C')+\mu(C'+x)-\mu\big(C'\cup(C'+x)\big)\ge 2\mu(C')-\mu(C'+U)>\\
&>2\tfrac34\mu(C)-\tfrac54\mu(C)=\tfrac14\mu(C)>0,
\end{aligned}
$$which means that $x\in P$ and $K\cap U$ is a neighborhood of $\theta$ in $K$.
\end{proof}

\begin{lemma}\label{l7} Let $X$ be an Abelian topological group such that for any compact set $K\subset X$ there exists a locally $K$-invariant Steinhaus-like probability Radon measure on $X$. Let $A,B$ be universally measurable sets in $X$. If $A,B$ are not Haar-null, then the set $A-B=\{a-b:a\in A,\;b\in B\}$ is Haar-open in $X$.
\end{lemma}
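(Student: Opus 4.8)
The plan is to reduce the whole statement to a single application of Lemma~\ref{l6}, using the Steinhaus-like property only to link the two sets $A$ and $B$ to each other. Fix a compact set $K\subset X$ and a point $p\in K$. First I would replace $K$ by the compact set $K':=K-p\ni\theta$: if I manage to produce $y'\in X$ for which $K'\cap\big((A-B)+y'\big)$ is a neighborhood of $\theta$ in $K'$, then applying the homeomorphism $z\mapsto z+p$ (which carries $K'$ onto $K$) shows that $K\cap\big((A-B)+y'+p\big)$ is a neighborhood of $p$ in $K$, so $y:=y'+p$ witnesses the Haar-openness of $A-B$ at $(K,p)$. Hence from now on we may assume $\theta\in K$, and we fix, using the hypothesis, a locally $K$-invariant Steinhaus-like Radon probability measure $\mu$ on $X$.

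The next step is to manufacture $\mu$-positive compact sets lying in translates of $A$ and $B$. Since $A$ is not Haar-null, $\mu$ does not witness its Haar-nullity, so $\mu(A+a)>0$ for some $a\in X$; as $A+a$ is universally measurable and $\mu$ is Radon, inner regularity gives a compact set $C\subset A+a$ with $\mu(C)>0$. Likewise there are $b\in X$ and a compact $D\subset B+b$ with $\mu(D)>0$. By clause (2) of Definition~\ref{d:stein}, there exist $s,t\in X$ with $\mu\big((C+s)\cap(D+t)\big)>0$; set $E:=(C+s)\cap(D+t)$, a $\mu$-positive compact set. The crucial inclusion is $E-E\subset(A-B)+y$ with $y:=a+s-b-t$: indeed $E\subset C+s\subset A+a+s$ and $E\subset D+t\subset B+b+t$, whence $-E\subset-B-b-t$ and $E-E\subset(A+a+s)+(-B-b-t)$. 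It is essential here not to translate $C$ or $D$ back into $A$ or $B$, because $\mu$ need not be globally translation-invariant; instead we carry the translations $a,s,b,t$ along and absorb them into the single shift $y$ at the end.

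Finally I would apply Lemma~\ref{l6} to the compact set $K\ni\theta$, the locally $K$-invariant Radon measure $\mu$, and the $\mu$-positive compact set $E$: the set $P:=\{x\in K:\mu\big(E\cap(E+x)\big)>0\}$ is a neighborhood of $\theta$ in $K$. For every $x\in P$ the set $E\cap(E+x)$ is nonempty, so $x=e-e'$ for some $e,e'\in E$, giving $x\in E-E\subset(A-B)+y$. Thus $P\subset K\cap\big((A-B)+y\big)$, so $K\cap\big((A-B)+y\big)$ is a neighborhood of $\theta$ in $K$, and the reduction in the first paragraph finishes the proof. The only genuinely delicate points are the regularity step turning the universally measurable, non-Haar-null sets $A,B$ into $\mu$-positive compact subsets of their translates (so that Definition~\ref{d:stein}(2) applies), and the careful bookkeeping of translations needed to remain inside an honest translate of $A-B$; granting Lemma~\ref{l6} and the Steinhaus-like property, the remainder is routine.
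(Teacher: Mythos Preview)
Your proof is correct and follows essentially the same route as the paper: reduce to a compact set containing $\theta$, use non-Haar-nullity to get $\mu$-positive translates, invoke the Steinhaus-like property to produce a single $\mu$-positive compact set $E$ contained in shifted copies of both $A$ and $B$, and then apply Lemma~\ref{l6}. Your extra step of first passing to compact subsets $C\subset A+a$ and $D\subset B+b$ before invoking Definition~\ref{d:stein}(2) is in fact slightly more careful than the paper, since that definition is stated for Borel sets while $A+a$, $B+b$ are only universally measurable.
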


\begin{proof} To show that $A-B$ is Haar-open, fix any compact set $K$ in $X$ and a point $p\in K$. We need to find a point $s\in X$ such that $(A-B+s)\cap K$ is a neighborhood of $p$ in $K$. By our assumption, for the compact set $K-p$, there exists a locally $(K-p)$-invariant Steinhaus-like probability measure $\mu$ on $X$. Since $A,B$ are not Haar-null, there exist points $a,b\in X$ such that $\mu(A+b)>0$ and $\mu(B+b)>0$.  
Since the measure $\mu$ is Steinhaus-like, there exist points $a',b'\in X$ such that the set $(A+a+a')\cap (B+b+b')$ is $\mu$-positive and hence contains some $\mu$-positive compact set $C$. By Lemma~\ref{l6}, the set $P=\{x\in K-p:\mu\big(C\cap(C+x)\big)>0\}$ if a neighborhood of $\theta$ in $K-p$ and hence $U=P+p$ is a neighborhood of $p$ in $K$. It follows that for every $x\in U$, we get $C\cap (C+x-p)\ne\emptyset$ and hence $x\in C+p-C\subset (A+a+a')+p-(B+b+b')$. So, for the point $s:=a+a'+p-b-b'$ the set $A-B+s$ contains each point $x$ of the set $U\subset K$ and $K\cap(A-B+s)\supset U$ is a neighborhood of $p$ in $K$.
\end{proof}

\begin{lemma}\label{l8}  Let $X=\prod_{n\in\w}X_n$ be the Tychonoff product of locally compact Abelian topological groups. Then for every compact subset $K\subset X$ the group $X$admits a Steinhaus-like locally $K$-invariant $\sigma$-additive Borel measure with compact support.
\end{lemma}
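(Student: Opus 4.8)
The plan is to construct $\mu$ as an infinite product $\mu=\bigotimes_{n\in\w}\mu_n$, where $\mu_n$ is a Haar measure of the locally compact group $X_n$, restricted to a suitable compact symmetric neighborhood $V_n$ of the neutral element $\theta_n$ and normalized to be a probability measure. We may assume $\theta\in K$ (replacing $K$ by $K\cup\{\theta\}$). Write $\lambda_n$ for a fixed Haar measure of $X_n$ and $K_n:=\pr_n(K)$ for the (compact) projections of $K$. The decisive requirement on $V_n$ is that it be so large relative to $K_n$ that the erosion $V_n\ominus K_n:=\{x\in X_n:x+K_n\subseteq V_n\}$ carries almost all of the mass of $\mu_n$: we ask that $K_n\subseteq V_n$, that $\lambda_n(\partial V_n)=0$, and that $\mu_n(V_n\ominus K_n)\ge 1-2^{-n-1}$. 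The existence of such neighborhoods is the main point of the proof; it follows from the amenability of the locally compact abelian group $X_n$ — for every compact $K_n$ and $\e>0$ there is a compact neighborhood of $\theta_n$ which is $(\e,K_n)$-invariant in the F\o lner sense — or, quite explicitly, from the structure theorem $X_n\cong\IR^{a_n}\times G_n$ with $G_n$ possessing a compact open subgroup: on the Euclidean factor one erodes a huge cube by a box containing the corresponding projection of $K_n$, and on $G_n$ one erodes a large union of cosets of the compact open subgroup by a finite set covering the corresponding projection of $K_n$; a slight shrinking afterwards makes $\partial V_n$ null. The support of $\mu$ is then the compact set $\prod_{n\in\w}V_n$, and $\mu$ is a Radon probability measure, a countable product of Radon probability measures being Radon. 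Condition~(1) of Definition~\ref{d:stein} is immediate: $V_n=-V_n$ and Haar measure on an abelian group is inversion invariant, so each $\mu_n$, hence $\mu$, is symmetric (it suffices to test this on the $\pi$-system of measurable boxes).

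For the Steinhaus-like property, condition~(2) of Definition~\ref{d:stein}, let $A,B\subseteq X$ be $\mu$-positive Borel sets. Since cylinder sets are dense in the measure algebra of a product measure, a routine Markov-inequality estimate yields a single index $m$ and Borel sets $H,G\subseteq\prod_{k<m}V_k$ of positive $\mu_{<m}$-measure with $\mu_{\ge m}(A_y)>\tfrac34$ for all $y\in H$ and $\mu_{\ge m}(B_y)>\tfrac34$ for all $y\in G$; here $X_{<m}:=\prod_{k<m}X_k$, $X_{\ge m}:=\prod_{k\ge m}X_k$, $\mu_{<m}:=\bigotimes_{k<m}\mu_k$ and $\mu_{\ge m}:=\bigotimes_{k\ge m}\mu_k$ are the factors of $\mu=\mu_{<m}\otimes\mu_{\ge m}$ under $X=X_{<m}\times X_{\ge m}$, and $A_y:=\{z\in X_{\ge m}:(y,z)\in A\}$. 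Since $X_{<m}$ is locally compact and $\mu_{<m}$ is a normalization of the Haar measure $\lambda_{<m}:=\bigotimes_{k<m}\lambda_k$ of $X_{<m}$ restricted to the compact set $\prod_{k<m}V_k$, Fubini's theorem and the translation- and inversion-invariance of $\lambda_{<m}$ give $\int_{X_{<m}}\lambda_{<m}\big((H+a)\cap G\big)\,d\lambda_{<m}(a)=\lambda_{<m}(H)\,\lambda_{<m}(G)>0$, so $\lambda_{<m}\big((H+c)\cap G\big)>0$, and hence $\mu_{<m}\big((H+c)\cap G\big)>0$, for some $c\in X_{<m}$. Put $a:=(c,\theta,\theta,\dots)$ and $b:=\theta$. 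Then $(A+a)\cap B\supseteq\bigcup_{y\in(H+c)\cap G}\{y\}\times\big(A_{y-c}\cap B_y\big)$, while $\mu_{\ge m}\big(A_{y-c}\cap B_y\big)\ge\mu_{\ge m}(A_{y-c})+\mu_{\ge m}(B_y)-1>\tfrac12$ for every $y\in(H+c)\cap G$, so Fubini's theorem yields
$$\mu\big((A+a)\cap B\big)\ \ge\ \int_{(H+c)\cap G}\mu_{\ge m}\big(A_{y-c}\cap B_y\big)\,d\mu_{<m}(y)\ \ge\ \tfrac12\,\mu_{<m}\big((H+c)\cap G\big)\ >\ 0,$$
which is exactly condition~(2).

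It remains to check that $\mu$ is locally $K$-invariant. For $k<n$ choose compact symmetric neighborhoods $W^{(n)}_k$ of $\theta_k$, decreasing in $n$, so small that $\mu_k\big(V_k\ominus(W^{(n)}_k\cap K_k)\big)>1-2^{-k-n}$; this is possible because, as $W$ shrinks to $\{\theta_k\}$, the open sets $\operatorname{int}(V_k)\ominus(W\cap K_k)$ increase to $\operatorname{int}(V_k)$, which has full $\mu_k$-measure since $\lambda_k(\partial V_k)=0$. Now set
$$M_n:=\prod_{k<n}\big(V_k\ominus(W^{(n)}_k\cap K_k)\big)\times\prod_{k\ge n}\big(V_k\ominus K_k\big),\qquad U_n:=\prod_{k<n}W^{(n)}_k\times\prod_{k\ge n}X_k.$$
Since erosion is antitone in its second argument and $W^{(n+1)}_k\subseteq W^{(n)}_k$, the Borel sets $M_n$ increase; and $\mu(M_n)\ge(1-2^{-n+1})\prod_{k\ge n}(1-2^{-k-1})\to 1$, so condition~(1) of Definition~\ref{d:inv} holds. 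For condition~(2), write $M_n=\prod_{k\in\w}M^{(k)}_n$ and take any $x\in U_n\cap K$ and any Borel set $B\subseteq M_n$. For each $k$ we have $M^{(k)}_n+x_k\subseteq V_k$: indeed $x_k\in W^{(n)}_k\cap K_k$ if $k<n$ and $x_k\in K_k$ if $k\ge n$, which is exactly what the erosions in the definition of $M_n$ are designed for. Consequently $\mu_k$ is translation invariant by $x_k$ on Borel subsets of $M^{(k)}_n$ (because $\lambda_k$ is translation invariant and these translates stay inside $V_k$), and testing on boxes and passing to the generated $\sigma$-algebra gives $\mu(B+x)=\mu(B)$. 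Thus $(M_n)_{n\in\w}$ and $(U_n)_{n\in\w}$ witness the local $K$-invariance of $\mu$, and the lemma is proved.

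The only genuine obstacle is the first step — producing compact symmetric neighborhoods $V_n$ for which $\mu_n(V_n\ominus K_n)$ is close to $1$ and $\partial V_n$ is null; everything after that is bookkeeping with product measures, Fubini's theorem, and the translation-invariance of Haar measure. Because the groups $X_n$ are not assumed metrizable, a few of the measure-theoretic steps (density of cylinder sets, the section arguments, the increasing-open-sets claim) should strictly be justified via the regularity and $\tau$-additivity of Radon measures, or by first reducing to $\sigma$-compact, hence essentially Polish, factors; this does not affect the structure of the argument.
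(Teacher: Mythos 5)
Your proposal is correct and follows essentially the same route as the paper: an infinite product of Haar measures normalized on large F\o lner-type compact sets chosen so that translation by the projections of $K$ barely moves mass, with local $K$-invariance witnessed by eroded product sets and the Steinhaus-like property obtained from a Fubini/overlap computation. The only differences are cosmetic — you phrase the F\o lner condition via erosions $V_n\ominus K_n$ where the paper dilates a F\o lner set $\Lambda_n$ to $\Lambda_n+K_n+U_{\omega,n}$, and your Steinhaus step locates large sections over $(H+c)\cap G$ where the paper integrates $\lambda(A\cap(B+s))$ over all shifts $s$ — and both verifications carry the same content.
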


\begin{proof} Fix a compact set $K\subset X$. Replacing $K$ by $K\cup\{\theta\}\cup(-K)$ we can assume that $\theta\in K=-K$.
 
For every $n\in\w$ identify the group $X_n$ with the subgroup $\{(x_i)_{i\in\w}\in X:x_n=\theta_n\}$ in $X$. Here by $\theta_n$ we denote the neutral element of the group $X_n$. Also we identify the products $\prod_{i< n}X_i$ and $\prod_{i\ge n}X_i$ with the subgroups $X_{<n}:=\{(x_i)_{i\in\w}\in X:\forall i<n\;\;x_i=\theta_i\}$ and $X_{\ge n}=\{(x_i)_{i\in\w}\in X:\forall i\ge n\;\;x_i=\theta_i\}$, respectively.

Let $\pr_n:X\to X_n$ and $\pr_{\le n}:X\to X_{\le n}$ be the natural coordinate projections. For every $n\in\w$ let $K_n$ be the projection of the compact set $K$ onto the locally compact group $X_n$.

Fix a decreasing sequence $(O_{i,n})_{i\in\w}$ of open neighborhoods of zero in the group $X_n$ such that  $\bar O_{0,n}$ is compact and $O_{i+1,n}+O_{i+1,n}\subset O_{i,n}=-O_{i,n}$ for every $i\in\w$. For every $m\in\w$ consider the neighborhood $U_{m,n}:=O_{1,n}+O_{2,n}+\dots+O_{m,n}$ of zero in the group $X_n$. By induction it can be shown that $-U_{mn,n}=U_{m,n}\subset O_{0,n}$ and $U_{m,n}\subset U_{m+1,n}$ for all $m\in\w$. Let $U_{\w,n}=\bigcup_{m\in\w}U_{m,n}$. 

Fix a Haar measure $\lambda_n$ in the locally compact group $X_n$.
Since the locally compact Abelian group $X_n$ is amenable, by the F\o lner Theorem \cite[4.13]{Pat}, there exists a compact set $\Lambda_n=-\Lambda_n\subset X$ such that $\lambda_n\big((\Lambda_n+K_n+U_{\w,n})\setminus \Lambda_n\big)<\frac1{2^n}\lambda_n(\Lambda_n)$. Multiplying the Haar measure $\lambda_n$ by a suitable positive constant, we can assume that $\lambda_n(\Omega_n)=1$ where $\Omega_n:=\Lambda_n+K_n+U_{\w,n}$. Now consider the probability  measure $\mu_n$ on $X_n$ defined by $\mu_n(B)=\lambda_n(B\cap \Omega_n)$ for any Borel subset of $X$. If follows from $\Omega_n=-\Omega_n$ that $\mu_n=-\mu_n$.

Let $\mu:=\otimes_{n\in\w}\mu_n$ be the product measure of the  
probability measures $\mu_n$. It is clear that $\mu=-\mu$. We claim that the measure $\mu$ is locally $K$-invariant and Steinhaus-like.

\begin{claim} The measure $\mu$ is locally $K$-invariant. 
\end{claim}

\begin{proof} For every $k\in\w$ consider the set 
$M_k:=\prod_{n<k}(\Lambda_n+K_n+U_{k,n})\times \prod_{n\ge k}\Lambda_n$. We claim that $\lim_{k\to\infty}M_k=1$.
Indeed, for every $\e>0$ we can find $m\in\IN$ such that $\prod_{n\ge m}(1-\frac1{2^n})>1-\e$ and then for any $n<m$ by the $\sigma$-additivity of the Haar measure $\lambda_n$, find $i_n> m$ such that $\lambda_n(\Lambda_n+K_n+U_{i_n,n})>(1-\e)^{1/m}$. Then for any $k\ge\max\limits_{n<m}i_n> m$, we obtain the lower bound 
$$
\begin{aligned}
\mu(M_k)&=\prod_{n<k}\lambda_n(\Lambda_n+K_n+U_{k,n})\cdot\prod_{n\ge k}\lambda_n(\Lambda_k)\ge \prod_{n<k}\lambda_n(\Lambda_n+K_n+U_{k,n})\cdot\prod_{n\ge m}\lambda_n(\Lambda_k)>\\
&>\prod_{n<m}(1-\e)^{\frac1m}\cdot\prod_{n\ge m}(1-\tfrac1{2^n})>(1-\e)^2.
\end{aligned}
$$

Now we show that the sequnce $(M_k)_{k\in\w}$ satisfies the second condition of Definition~\ref{d:inv}. Given any number $k\in\IN$, consider the the neighborhood $U:=\prod_{n<m}O_{k+1,n}\times\prod_{n\ge m}X_n$ of $\theta$ in $X$, and observe that for any $x\in U\cap K$, we have $M_k+x\subset\prod_{n\in\w}\Omega_n$. Consequently, for any Borel subset $B\subset M_k$ we get $\mu(B)=\mu(B+x)$ by the definition of the measure $\mu$.
\end{proof}

In the following claim we prove that the measure $\mu=-\mu$ is Steinhaus-like.

\begin{claim} For any $\mu$-positive Borel sets $A,B\subset X$ there are points $a,b\in X$ such that $\mu((A+a)\cap (B+b))>0$.
\end{claim}

\begin{proof} We lose no generality assuming that the sets $A,B$ are compact and are contained in $\prod_{n\in\w}\Omega_n$. By the regularity of the measure $\mu$, there exists a neighborhood $U\subset X$ of $\theta$ such that $\mu(A+U)<\frac65\mu(A)$ and $\mu(B+U)<\frac65\mu(B)$. Replacing $U$ by a smaller neighborhood, we can assume that $U$ is of the basic form $U=V+X_{<n}$ for some $n\in\w$ and some open set $V\subset X_{<n}$. Consider the tensor product $\lambda=\lambda'\otimes\mu'$ of the measures $\lambda'=\otimes_{k<n}\lambda_k$ and $\mu':=\otimes_{k\ge n}\mu_n$ on the subgroups $X_{<n}$ and $X_{\ge n}$ of $X$, respectively. It follows from $A\cup B\subset \prod_{n\in\w}\Omega_n$ that $\lambda(A)=\mu(A)$ and $\lambda(B)=\mu(B)$. Observe also the the measure $\lambda$ is $X_{<n}$-invariant in the sense that $\lambda(C)=\lambda(C+x)$ for any Borel set $C\subset X$ and any $x\in X_{<n}$.

Consider the projection $A'$ of $A$ onto $X_{<n}$ and for every $y\in X_{\ge n}$ let $A_y:=X_{<n}\cap(A-y)$ be a shifted $y$-th section of the set $A$.
Taking into account that $A_y\subset A'$, we conclude that $$\lambda'(A_y)\le\lambda'(A')=\mu(A'+X_{\ge n})\le\mu(A+U)<\tfrac65\mu(A).$$

By the Fubini Theorem, $\mu(A)=\int_{\mu'}\lambda'(A_y)dy>\frac56\mu(A'+X_{<n})=\frac56\lambda'(A')$. Consider the set  $$L_A:=\big\{y\in \prod_{k\ge n}\Omega_k:\lambda'(A_y)>\frac13\lambda'(A')\big\}$$ and observe that
$$\frac56\lambda'(A')<\mu(A)\le\mu'(L_A)\cdot \lambda'(A')+\frac13\lambda'(A')\cdot\big(1-\mu'(L_A)\big)$$ and hence $\mu'(L_A)>\frac34$.

By analogy, for the set $B$ consider the projection $B'$ of $B$ onto $X_{<n}$ and for every $x\in B'$ put $B_x:=X_{\ge n}\cap (B-x)$. Repeating the above argument we can show that the set $L_B:=\{y\in \prod_{k\ge n}\Omega_k:\mu'(B-x)>\frac13\lambda'(B')\}$ has measure $\mu'(L_B)>\frac34$.
Then $\mu'(L_A\cap L_B)>\frac12$.

Now we are ready to find $s\in X_{<n}$ such that $A\cap (B+s)\ne\emptyset$.
Observe that a point $z\in X$ belongs to $A\cap (B+s)$ is and only if $\chi_A(z)\cdot\chi_{B+s}(z)>0$, where $\chi_A$ and $\chi_{B+s}$ denote the characteristic functions of the sets $A$ and $B+s$ in $X$.

So, it suffices to show that the function $\chi_A\cdot \chi_{B+s}$ has a non-zero value. For this write $z$ as a pair $(x,y)\in X_{<n}\times X_{\ge n}$ and  for every $s\in X_{<n}$ consider the function 
$$g(s):=\int_{\lambda}\chi_A(z)\cdot\chi_{B+s}(z)dz=
\int_{\lambda'}\int_{\mu'}\chi_A(x,y)\cdot\chi_{B+s}(x,y)\,dy\,dx$$and its integral, transformed with help of Fubini's Theorem:
$$
\begin{aligned}
&\int_{\lambda'}g(s)\,ds=\int_{\lambda'}
\int_{\mu'}\int_{\lambda'}\chi_A(x,y)\cdot\chi_{B+s}(x,y)\,dx\, dy\,ds=\int_{\mu'}\int_{\lambda'}\int_{\lambda'}\chi_A(x,y)\cdot\chi_{B}(x-s,y)\,ds\, dx\, dy=\\
&\int_{\mu'}\int_{\lambda'}\chi_A(x,y)\int_{\lambda'}\chi_B(x-s,y)\,ds\,dx\, dy=\int_{\mu'}\int_{\lambda'}\chi_A(x,y)\lambda'(B_y)\,dx \,dy=\\
&\int_{\mu'}\lambda'(B_y)\int_{\lambda'}\chi_A(x,y)\,dx\, dy=\int_{\mu'}\lambda'(B_y)\cdot \lambda'(A_y)\,dy>\frac19\lambda'(A')\cdot\lambda'(B')\cdot \mu'(L_A\cap L_B)>0.
\end{aligned}
$$
Now we see that $g(s)>0$ for some $s\in X_{<n}$ and hence $A\cap (B+s)\ne\emptyset$.
\end{proof}
\end{proof}

Main Theorem raises many intriguing open problems.

\begin{problem} Is the conclusion of the Main Theorem true for Banach spaces? Is it true for the classical Banach space $c_0$ or $\ell_2$?
\end{problem}

A category version of the Haar-null set was recently introduced by Darji \cite{D}, who defined a Borel subset $B$ of a Polish Abelian group $X$ to be {\em Haar-meager} if there exists a continuous map $f:K\to X$ defined on a compact metrizable space such that $f^{-1}(A+x)$ is meager in $K$ for every $x\in X$. More information on Haar-meager sets can be found in \cite{BGJS}, \cite{EN}, \cite{J}. Observe that a closed subset of a Polish Abelian group is Haar-meager if and only if it is not Haar-open. 

\begin{problem} Let $A,B$ two Borel subsets of a Abelian Polish group $X\in\{\IR^\w,c_0,\ell_2\}$ such that $A,B$ are not Haar-meager in $X$. Is the sum-set $A+B$ (or  $A+A$) Haar-open?
\end{problem}

\end{document}